\theoremstyle{plain}
\newtheorem{theorem}{Theorem}[section]
\newtheorem{lemma}[theorem]{Lemma}
\newtheorem{corollary}[theorem]{Corollary}
\theoremstyle{definition}
\newtheorem{definition}[theorem]{Definition}
\newtheoremstyle{TheoremNum}
	{\topsep}{\topsep}              %%% space between body and thm
  {\itshape}                      %%% Thm body font
  {}                              %%% Indent amount (empty = no indent)
  {\bfseries}                     %%% Thm head font
  {.}                             %%% Punctuation after thm head
  { }                             %%% Space after thm head
  {\thmname{#1}\thmnote{ \bfseries #3}}%%% Thm head spec
\newtheorem*{remark}{Remark}
\newcommand{\E}{\mathbb E}
\newcommand{\F}{\mathbb F}
\newcommand{\K}{\mathbb K}
\newcommand{\cN}{\mathcal N}
\newcommand{\cG}{\mathcal G}
\newcommand{\bv}{\mathbf v}
\newcommand{\cC}{\mathcal C}
\newcommand{\rC}{\mathscr C}
\newcommand{\rL}{\mathscr L}
\newcommand{\sN}{\mathrm{N}}
\newcommand{\Aut}{\mathrm{Aut}}
\newcommand{\End}{\mathrm{End}}
\newcommand{\GL}{\mathrm{GL}}
\newcommand{\RN}[1]{%
  \textup{\uppercase\expandafter{\romannumeral#1}}%
}
\newcommand{\rn}[1]{%
  \textup{\lowercase\expandafter{\romannumeral#1}}%
}
\newcommand{\abs}[1]{\lvert#1\rvert}
\renewcommand{\le}{\leqslant}
\def\zhou#1 {\fbox {\footnote {\ }}\ \footnotetext { From Joe: {\color{red}#1}}}
\def\schmidt#1 {\fbox {\footnote {\ }}\ \footnotetext { From Kai: {\color{blue}#1}}}
\date{19 April 2017 (revised 16 September 2017)}
\begin{document}
	\title[On the number of inequivalent Gabidulin codes]{On the number of inequivalent\\Gabidulin codes}
	\author[K.-U. Schmidt]{Kai-Uwe Schmidt}
	\address{Department of Mathematics, Paderborn University, 33098 Paderborn, Germany}
	\email{kus@math.upb.de}
	\author[Y. Zhou]{Yue Zhou}
	\address{College of Science, National University of Defense Technology, 410073 Changsha, China}
	\email{yue.zhou.ovgu@gmail.com}
	\maketitle

%%%%%%%%%%%%%%%%%%%%%%%%%%%%%%%%%%%%%%%%%%%%%%%%%%%%%%%%%%%%%%%%%%%%%%%%

\begin{abstract}
Maximum rank-distance (MRD) codes are extremal codes in the space of $m\times n$ matrices over a finite field, equipped with the rank metric. Up to generalizations, the classical examples of such codes were constructed in the 1970s and are today known as Gabidulin codes. Motivated by several recent approaches to construct MRD codes that are inequivalent to Gabidulin codes, we study the equivalence issue for Gabidulin codes themselves. This shows in particular that the family of Gabidulin codes already contains a huge subset of MRD codes that are pairwise inequivalent, provided that $2\le m\le n-2$.
\end{abstract}

\section{Introduction}

Let $\K$ be a finite field. The \emph{rank metric} on the $\K$-vector space $\K^{m\times n}$ is defined by
\[
d(A,B)=\mathrm{rk}(A-B) \,\, \text{for} \,\, A,B\in \K^{m\times n}.
\]
We call a subset of $\K^{m\times n}$ equipped with the rank metric a \emph{rank-metric code}. The \emph{minimum distance} of a rank-metric code~$\cC$ is given by
\[
d(\cC)=\min_{A,B\in \cC, A\neq B} d(A,B)
\]
(where we tacitly assume that every rank-metric code contains at least two elements). When $\cC$ is a $\K$-subspace of $\K^{m\times n}$, we say that $\cC$ is a \emph{$\K$-linear} code of dimension $\dim_{\K}(\cC)$. In what follows, we always assume that $m\le n$. It is well known (and easily verified) that every rank-metric code $\cC$ in $\K^{m\times n}$ with minimum distance $d$ satisfies
\[
\abs{\cC}\le \abs{\K}^{n(m-d+1)}.
\]
In case of equality, $\cC$ is called a \emph{maximum} rank-metric code, or \emph{MRD code} for short. MRD codes have been studied since the 1970s and have seen much interest in recent years due to an important application in the construction of error-correcting codes for random linear network coding~\cite{koetter_coding_2008}.
\par
There are several interesting structures in finite geometry, such as quasifields, semifields, and splitting dimensional dual hyperovals, which can be equivalently described as special types of rank-metric codes; see~\cite{dempwolff_dimensional_2014},~\cite{dempwolff_orthogonal_2015},~\cite{johnson_handbook_2007}, \cite{taniguchi_unified_2014}, for example. In particular, a finite quasifield corresponds to an MRD code in $\K^{n\times n}$ with minimum distance~$n$ and a finite semifield corresponds to such an MRD code that is a subgroup of $\K^{n\times n}$ (see~\cite{de_la_cruz_algebraic_2016} for the precise relationship). Many essentially different families of finite quasifields and semifields are known \cite{lavrauw_semifields_2011}, which yield many inequivalent MRD codes in $\K^{n\times n}$ with minimum distance~$n$. In contrast, it appears to be much more difficult to obtain inequivalent MRD codes in $\K^{m\times n}$ with minimum distance strictly less than $m$ (recall that $m\le n$). For the relationship between MRD codes and other geometric objects such as linear sets and Segre varieties, we refer to \cite{lunardon_mrd-codes_2017}.
\par
Based on the classification of the isometries of $\K^{m\times n}$ with respect to the rank metric \cite[Theorem~3.4]{wan_geometry_1996}, we use the following notion of equivalence of rank-metric codes.
\begin{definition}
\label{def:equivalence}
Two rank-metric codes $\cC_1$ and $\cC_2$ in $\K^{m\times n}$ are \emph{equivalent} if there exist $A\in\GL_m(\K)$, $B\in \GL_n(\K)$, $C\in\K^{m\times n}$ and $\rho\in\Aut(\K)$ such that 
\[
\cC_2=\{AX^{\rho}B+C:X \in\cC_1\}
\]
or (but only in the case $m=n$)
\[
\cC_2=\{AX^{\rho}B+C:X^T\in\cC_1\},
\]
where $(\,.\,)^T$ means transposition. 
\end{definition}
\par
Notice that, if $\cC_1$ and $\cC_2$ in Definition~\ref{def:equivalence} are $\K$-linear, then we can without loss of generality let~$C$ be the zero matrix.
\par
A canonical construction of MRD codes was given by Delsarte~\cite{delsarte_bilinear_1978}. This construction was rediscovered by Gabidulin~\cite{gabidulin_MRD_1985} and later generalized by Kshevetskiy and Gabidulin~\cite{kshevetskiy_new_2005}. Today it is customary to call the codes in this generalized family the \emph{Gabidulin codes} (see Section~\ref{sec:codes}, for a precise definition). 
\par
In recent years, an increased interest emerged concerning the question as to whether Gabidulin codes are unique at least for certain parameter sets, or if not, what other constructions can be found. Partial answers were given recently by Horlemann-Trautmann and Marshall~\cite{horlemann-trautmann_new-criteria_2017}, who showed indeed that Gabidulin codes are unique among $\K$-linear MRD codes for certain parameters. On the other hand there are several recent constructions of MRD codes, which were proven to be inequivalent to Gabidulin codes~\cite{cossidente_non-linear_2016},~\cite{csajbok_maximum_2017},~\cite{donati_generalization_2017},~\cite{durante_nonlinear_MRD_2017},~\cite{horlemann-trautmann_new-criteria_2017},~\cite{lunardon_generalized_2015},~\cite{neri_genericity_2017},~\cite{sheekey_new_2016}. 
\par
The aim of this paper is to show that the family of Gabidulin codes in $\K^{m\times n}$ already contains a huge subset of pairwise inequivalent MRD codes, provided that $2\le m\le n-2$. To this end, let $d$ be an integer such that $1\le d\le m\le n$. Gabidulin codes in $\K^{m\times n}$ with minimum distance $d$ can be obtained from Gabidulin codes in $\K^{n\times n}$ with the same minimum distance via projections, obtained by left multiplication with a full-rank $m\times n$ matrix. There are as many as
\[
(q^n-1)(q^n-q)\cdots(q^n-q^{m-1})
\]
projections (where $q=\abs{\K}$) and some of them are obviously equivalent. The main result of this paper is a precise characterization of the equivalence of two projections of a Gabidulin code. This shows that most projections coming from a single Gabidulin code in $\K^{n\times n}$ are pairwise inequivalent, which leads to the following result.
\begin{theorem}
\label{thm:counting_main}
For positive integers $m,n,d$ with $1<d\le m\le n$, there are at least
\[
\frac{1}{n}\prod_{i=2}^m\frac{q^{n-i+1}-1}{q^i-1}
\]
$\K$-linear pairwise inequivalent Gabidulin MRD codes in $\K^{m\times n}$ with minimum distance~$d$.
\end{theorem}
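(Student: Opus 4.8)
The plan is to fix a single Gabidulin code $\cG$ in $\K^{n\times n}$ whose rank-metric projections to $\K^{m\times n}$ are MRD codes of minimum distance $d$, and to count how many pairwise inequivalent codes already occur among these projections; since every Gabidulin code in $\K^{m\times n}$ is by definition such a projection, any lower bound obtained this way is a lower bound for the quantity in the statement. The projections are indexed by the full-rank matrices $M\in\K^{m\times n}$. For $U\in\GL_m(\K)$ the matrices $M$ and $UM$ have the same row space and their projections $\cC_M$ and $\cC_{UM}=U\cC_M$ are equivalent by Definition~\ref{def:equivalence}, and conversely all full-rank matrices with a given row space form a single $\GL_m(\K)$-orbit. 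Thus, before accounting for any further symmetry, a projection depends only on the row space of $M$, an $m$-dimensional $\K$-subspace of $\K^n$, and I would recast the problem on the Grassmannian of such subspaces.

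Next I would feed in the characterization of equivalence of two projections (the main theorem of the paper) to decide which row spaces yield equivalent codes. Identifying $\K^n$ with $\F_{q^n}$, I expect this characterization to say that two projections are equivalent exactly when their row spaces lie in the same orbit under the natural action of $\GamL_1(q^n)=\F_{q^n}^*\rtimes\Gal(\F_{q^n}/\K)$ on the subspaces of $\F_{q^n}$, that is, field multiplication combined with the Frobenius automorphism. Granting this, the equivalence classes among the projections of $\cG$ are precisely the orbits of $\GamL_1(q^n)$ on the $m$-dimensional subspaces. The decisive point for counting is that the stabilizer of any subspace contains the scalar subgroup $\K^*$, which fixes every subspace; hence each orbit has size at most $\abs{\GamL_1(q^n)}/(q-1)=n(q^n-1)/(q-1)$.

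The count is then an orbit-counting estimate. The number of $m$-dimensional $\K$-subspaces of $\K^n$ is the Gaussian binomial coefficient $\binoq{n}{m}=\prod_{i=1}^m(q^{n-i+1}-1)/(q^i-1)$, and splitting them into orbits of size at most $n(q^n-1)/(q-1)$ forces the number of orbits, hence the number of pairwise inequivalent projections, to be at least
\[
\frac{q-1}{n(q^n-1)}\binoq{n}{m}.
\]
Cancelling the factor $(q^n-1)/(q-1)$ against the $i=1$ term $(q^n-1)/(q-1)$ of this product rewrites the bound as $\tfrac1n\prod_{i=2}^m(q^{n-i+1}-1)/(q^i-1)$, which is the assertion; since all codes in question are $\K$-linear MRD codes of minimum distance $d$, this proves the theorem.

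I expect the real content to lie not in the final arithmetic but in two checks. First, one must read off from the equivalence characterization that the acting group is exactly $\GamL_1(q^n)$ modulo scalars, so that the effective order is $n(q^n-1)/(q-1)$ and not larger; here care is needed to rule out extra symmetries (for instance the adjoint/transpose symmetry, which Definition~\ref{def:equivalence} permits only when $m=n$) that would inflate the orbits in the range $m<n$. Second, I would confirm that the crude bound ``orbit size at most group order'' is legitimate for a lower bound: smaller orbits only increase the number of classes, so the inequality is safe, but I must make sure that the $\GamL_1(q^n)$-orbits genuinely coincide with the code-equivalence classes (both directions of the characterization), so that dividing $\binoq{n}{m}$ by the maximal orbit size really does bound the number of inequivalent codes from below.
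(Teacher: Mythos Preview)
Your proposal is correct and follows essentially the same route as the paper: invoke Theorem~\ref{th:iff_mn_codes_all} to identify equivalence classes of projections with orbits of $\GL_1(\F)\rtimes\Aut(\F/\K)$ on the $m$-dimensional $\K$-subspaces of $\F$, observe that $\K^*$ acts trivially so each orbit has size at most $n(q^n-1)/(q-1)$, and divide the Gaussian binomial $\binoq{n}{m}$ by this maximal orbit size. Your framing via row spaces of projection matrices is just a reformulation of the paper's direct use of subspaces $U\subseteq\F$, and your cancellation of the $i=1$ factor matches the paper's final arithmetic exactly.
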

\par
Notice that the lower bound in Theorem~\ref{thm:counting_main} is nontrivial only when $2\le m\le n-2$.
\par
The remainder of this paper is organised as follows. In Section~\ref{sec:pre} we describe rank-metric codes using linearized polynomials, characterize the equivalence between rank-metric codes from this viewpoint, and study nuclei of rank-metric codes. In Section~\ref{sec:codes} we give necessary and sufficient conditions for the equivalence of two projections of a Gabidulin code, from which Theorem~\ref{thm:counting_main} follows.

%%%%%%%%%%%%%%%%%%%%%%%%%%%%%%%%%%%%%%%%%%%%%%%%%%%%%%%%%%%%%%%%%%%%%%%%

\section{Rank-metric codes and linearized polynomials}
\label{sec:pre}

We continue using $\K$ to denote a finite field with $q$ elements and let $\F$ be an extension of $\K$ with $[\F:\K]=n$. In this section, we shall describe rank-metric codes in $\K^{m\times n}$ using the language of $\K$-linearized polynomials in $\F[X]$, which are the polynomials in the set
\[
\rL_{\F/\K}=\left\{\sum c_i X^{q^i}: c_i\in \F\right\}.
\]
In what follows, we associate with a given $\K$-subspace $U$ of $\F$ the $\K$-linearized polynomial  
\[
\theta_U=\prod_{u\in U}(X-u)
\]
and let $\bv:\F\to\K^n$ be an isomorphism that maps an element of $\F$ to its coordinate vector with respect to a fixed basis for $\F$ over $\K$.
\begin{lemma}\label{le:polynomials_matrices}
Let $m$ and $n$ be positive integers satisfying $m\le n$. Let $U$ be an $m$-dimensional $\K$-subspace of $\F$ and let $\{\alpha_1,\dots,\alpha_m\}$ be a basis for $U$. Then we have 
\[
\rL_{\F/\K}/(\theta_U)\cong\left\{\left(\bv(f(\alpha_1)),\dots, \bv(f(\alpha_m))\right)^T: f\in\rL_{\F/\K} \right\}.
\]
\end{lemma}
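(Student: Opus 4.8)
The plan is to exhibit the claimed isomorphism as the one induced by evaluation, via the first isomorphism theorem. Define the $\K$-linear map
\[
\epsilon\colon\rL_{\F/\K}\to\K^{m\times n},\qquad \epsilon(f)=(\bv(f(\alpha_1)),\dots,\bv(f(\alpha_m)))^T,
\]
whose image is, by construction, exactly the set on the right-hand side of the lemma. That $\epsilon$ is $\K$-linear is clear, since for fixed $\alpha$ the evaluation $f\mapsto f(\alpha)$ is additive and $\K$-homogeneous in the coefficients of $f$, and $\bv$ is a $\K$-linear isomorphism; in fact, because $(g\circ f)(\alpha_i)=g(f(\alpha_i))$, the map $\epsilon$ intertwines composition on the left with the natural action of $\rL_{\F/\K}$ on $\K^{m\times n}$, so the isomorphism we produce will respect this extra module structure. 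It therefore suffices to prove that $\ker\epsilon$ coincides with $(\theta_U)=\{g\circ\theta_U:g\in\rL_{\F/\K}\}$, after which the first isomorphism theorem gives $\rL_{\F/\K}/(\theta_U)\cong\mathrm{im}\,\epsilon$, which is the claim.

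The inclusion $(\theta_U)\subseteq\ker\epsilon$ is immediate. Recall that $\theta_U$ is a monic $\K$-linearized polynomial of $q$-degree $m$ that vanishes on all of $U$ by construction; hence for $f=g\circ\theta_U$ and any $\alpha_i\in U$ we get $f(\alpha_i)=g(\theta_U(\alpha_i))=g(0)=0$, so $\epsilon(f)=0$.

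For the reverse inclusion I would invoke the right-division algorithm for $\K$-linearized polynomials: since $\theta_U$ is monic of $q$-degree $m$, every $f\in\rL_{\F/\K}$ can be written as $f=g\circ\theta_U+r$ with $g,r\in\rL_{\F/\K}$ and $\deg_q r<m$. Now suppose $\epsilon(f)=0$. Then $f$ vanishes on the basis $\alpha_1,\dots,\alpha_m$ of $U$, and hence on all of $U$ by $\K$-linearity; since $\theta_U$ vanishes on $U$ as well, so does $r=f-g\circ\theta_U$. But a nonzero element of $\rL_{\F/\K}$ of $q$-degree less than $m$ has ordinary degree at most $q^{m-1}<q^m=\abs U$, hence fewer than $\abs U$ roots, contradicting the fact that it vanishes on $U$. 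Thus $r=0$ and $f=g\circ\theta_U\in(\theta_U)$, which yields $\ker\epsilon=(\theta_U)$ and completes the proof. (A dimension count, using that the remainders $r$ with $\deg_q r<m$ form an $mn$-dimensional $\K$-space, even shows that $\mathrm{im}\,\epsilon$ is all of $\K^{m\times n}$, though this is not needed for the statement.)

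I expect the only genuinely delicate point to be the reverse inclusion, and even there the work is carried by two classical facts about linearized polynomials: the availability of right division by the monic polynomial $\theta_U$, and the bound on the number of roots in terms of the $q$-degree. Once these are granted, the argument is pure bookkeeping: checking linearity of $\epsilon$, identifying its image with the displayed set, and applying the first isomorphism theorem.
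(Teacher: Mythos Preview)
Your proof is correct and follows essentially the same route as the paper: define the evaluation map into $\K^{m\times n}$, check $\K$-linearity, identify the kernel with $(\theta_U)$, and apply the first isomorphism theorem. The only differences are cosmetic: the paper asserts surjectivity of the map and the identity $\ker\varphi=(\theta_U)$ in one line, whereas you spell out the kernel computation via right division by $\theta_U$ and the root bound, and relegate surjectivity to a parenthetical dimension count.
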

\begin{proof}
The map given by
\begin{center}
\begin{tabular}{cccl}
$\varphi$ :& $\rL_{\F/\K}$ &$\rightarrow$ &$\K^{m\times n}$,\vspace{0.2cm}\\
           & $f$           &$\mapsto$     & $\left(\bv(f(\alpha_1)),\dots,\bv(f(\alpha_m))\right)^T$.
\end{tabular}
\end{center}
is surjective and $\K$-linear. By noting that $\varphi(f)$ is the zero matrix if and only if $f(x)=0$ for every $x\in U$, we see that $\ker(\varphi)=(\theta_U)$, which completes the proof.
\end{proof}
\par
In particular, for $U=\F$, Lemma~\ref{le:polynomials_matrices} implies
\[
\End_{\K}(\F)\cong\rL_{\F/\K}/(X^{q^n}-X),
\]
where $\End_{\K}(\F)$ is the set of endomorphisms on $\F$ as a vector space over $\K$. We shall identify $\End_{\K}(\F)$ with $\rL_{\F/\K}/(X^{q^n}-X)$.
\par
For a $\K$-subspace $U$ of $\F$, we define  
\begin{center}
\begin{tabular}{cccc}
$\pi_U$ :& $\rL_{\F/\K}$ &$\rightarrow$ &$\rL_{\F/\K}/(\theta_U)$,\vspace{0.2cm}\\
         & $f$            &$\mapsto$     & $f+(\theta_U)$.
\end{tabular}
\end{center}
Then we can associate with a subset $\cC$ of $\K^{m\times n}$ an $m$-dimensional subspace~$U$ of $\F$ and identify matrices in $\cC$ with elements of $\rL_{\F/\K}/(\theta_{U})$. In this way, rank-metric codes in $\K^{m\times n}$ can be equivalently investigated using subsets of $\rL_{\F/\K}$.
\par
\begin{lemma}
\label{le:f=0}
Let $U$ be an $m$-dimensional $\K$-subspace of $\F$. Let $\rC$ be a subset of $\rL_{\F/\K}$ and suppose that for all distinct $f,g\in\rC$, the number of solutions $x\in U$ of $f(x)=g(x)$ is strictly smaller than $\abs{U}$. Then $\pi_U$ is injective~on~$\rC$.
\end{lemma}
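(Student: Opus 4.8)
The plan is to argue pairwise: for distinct $f,g\in\rC$ I will show $\pi_U(f)\neq\pi_U(g)$. By the definition of $\pi_U$, the equality $\pi_U(f)=\pi_U(g)$ holds exactly when $f-g\in(\theta_U)$, so everything reduces to understanding membership in the ideal $(\theta_U)$.

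The key point is already implicit in the proof of Lemma~\ref{le:polynomials_matrices}: there it was shown that the kernel of the evaluation map $\varphi$ equals $(\theta_U)$, and that $\varphi(h)$ is the zero matrix precisely when $h(x)=0$ for every $x\in U$. Hence $(\theta_U)$ is exactly the set of $h\in\rL_{\F/\K}$ that vanish on all of $U$. Concretely, $\theta_U=\prod_{u\in U}(X-u)$ has the $\abs{U}$ distinct elements of $U$ as simple roots, so a polynomial is divisible by $\theta_U$ if and only if it vanishes at every point of $U$. Applying this to $h=f-g$, which again lies in $\rL_{\F/\K}$, I obtain that $\pi_U(f)=\pi_U(g)$ if and only if $f(x)=g(x)$ for all $x\in U$.

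The hypothesis then finishes the argument. That $f(x)=g(x)$ for all $x\in U$ is precisely the assertion that the number of solutions $x\in U$ of $f(x)=g(x)$ equals $\abs{U}$. By assumption this count is strictly smaller than $\abs{U}$ whenever $f,g\in\rC$ are distinct, so $f-g$ cannot lie in $(\theta_U)$, and therefore $\pi_U(f)\neq\pi_U(g)$. This proves that $\pi_U$ is injective on $\rC$. There is no genuine obstacle here; the only step deserving care is the identification of $(\theta_U)$ with the set of linearized polynomials vanishing on $U$, and that is already supplied by the proof of Lemma~\ref{le:polynomials_matrices}.
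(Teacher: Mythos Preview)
Your argument is correct and is essentially the same as the paper's: both reduce injectivity to the observation that $f\equiv g\pmod{\theta_U}$ if and only if $f(x)=g(x)$ for every $x\in U$, which immediately contradicts the hypothesis for distinct $f,g$. You simply spell out this equivalence in more detail than the paper, which dispatches the whole proof in a single sentence.
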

\begin{proof}
Since $f\equiv g\pmod {\theta_U}$ if and only if $f(x)=g(x)$ for every $x\in U$, the lemma follows.
\end{proof}
\par
\begin{corollary}\label{coro:representation_all}
Let $U$ be an $m$-dimensional $\K$-subspace of $\F$. Let $s$ be an integer such that $\gcd(n,s)=1$. Then the set 
\[
\{a_0X+a_1X^{q^s}+\dots+a_{m-1}X^{q^{s(m-1)}}: a_0,\dots, a_{m-1}\in \F\}
\]
is a complete system of distinct representatives for $\rL_{\F/\K}/(\theta_U)$.
\end{corollary}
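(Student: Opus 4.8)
The plan is to exhibit the set in the statement, call it $\cS$, as the image of a bijection onto $\rL_{\F/\K}/(\theta_U)$, by combining a cardinality count with an injectivity argument. First I would record that both sides have the same size. By Lemma~\ref{le:polynomials_matrices} (applied to the given subspace $U$) the quotient $\rL_{\F/\K}/(\theta_U)$ is in bijection with $\K^{m\times n}$ and hence has $q^{mn}$ elements. On the other hand, the exponents $q^{0},q^{s},\dots,q^{s(m-1)}$ are pairwise distinct, so distinct coefficient tuples $(a_0,\dots,a_{m-1})\in\F^m$ give distinct polynomials; thus $\abs{\cS}=\abs{\F}^m=q^{mn}$ as well. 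It therefore suffices to prove that $\pi_U$ is injective on $\cS$, since an injection between finite sets of equal cardinality is a bijection, and a bijection onto the quotient is exactly a complete system of distinct representatives.

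To prove injectivity I would invoke Lemma~\ref{le:f=0} with $\rC=\cS$. For distinct $f,g\in\cS$ their difference $h=f-g$ is again of the form $\sum_{i=0}^{m-1}b_iX^{q^{si}}$ and is not the zero polynomial. The set of $x\in U$ with $f(x)=g(x)$ is precisely the set of roots of $h$ lying in $U$, which is contained in the full root set of $h$ in $\F$; the latter is a $\K$-subspace, so it has size $q^{\dim_\K\ker h}$. Hence the hypothesis of Lemma~\ref{le:f=0}, that this count be strictly smaller than $\abs{U}=q^m$, reduces to the single inequality $\dim_\K\ker h\le m-1$ for every nonzero $h\in\cS$.

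The crux is therefore the following root bound: a nonzero $\K$-linearized polynomial $h=\sum_{i=0}^{m-1}b_iX^{q^{si}}$ with $\gcd(s,n)=1$ has at most $q^{m-1}$ roots in $\F$. I would argue by contradiction: if $h$ had at least $q^m$ roots in $\F$, then $\ker h$ would contain $\K$-linearly independent elements $w_1,\dots,w_m$, and the homogeneous system $\sum_{i=0}^{m-1}c_iw_\ell^{q^{si}}=0$ for $\ell=1,\dots,m$ would have the nonzero solution $(b_0,\dots,b_{m-1})$, forcing the $m\times m$ matrix $\big(w_\ell^{q^{si}}\big)_{0\le i\le m-1,\,1\le\ell\le m}$ to be singular. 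Writing $\tau\colon x\mapsto x^{q^s}$, this matrix is $\big(\tau^i(w_\ell)\big)$, and its determinant is a generalized Moore determinant. This is where the coprimality hypothesis enters decisively: because $\gcd(s,n)=1$, the automorphism $\tau$ generates $\Gal(\F/\K)$ and hence has fixed field exactly $\K$, so the Moore determinant vanishes if and only if $w_1,\dots,w_m$ are linearly dependent over $\K$. Since the $w_\ell$ were chosen $\K$-linearly independent, the determinant is nonzero, a contradiction.

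I expect this last step, the nonvanishing of the twisted Moore determinant, to be the main obstacle, both because it is the only place the hypothesis $\gcd(s,n)=1$ is used and because it requires the generalization of the classical Moore determinant (where $\tau$ is the $q$-power Frobenius) to an arbitrary generator of the Galois group; one establishes it either by a direct induction on $m$ mimicking the Vandermonde argument, or by reducing to the classical case after substituting $\tau$ for the Frobenius. Granting the root bound, the three ingredients, equal cardinality, the reduction via Lemma~\ref{le:f=0}, and $\dim_\K\ker h\le m-1$, combine immediately to show that $\pi_U$ restricts to a bijection from $\cS$ onto $\rL_{\F/\K}/(\theta_U)$, which is the assertion.
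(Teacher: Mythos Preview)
Your argument is correct and follows the paper's approach exactly: the paper also applies Lemma~\ref{le:f=0} together with the root bound that every nonzero element of $\cS$ has at most $q^{m-1}$ zeros in $\F$, the cardinality match being left implicit. The only difference is that the paper cites the root bound from \cite[Theorem~5]{gow_galois_2009} rather than proving it via the generalized Moore determinant as you do.
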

\begin{proof}
By \cite[Theorem 5]{gow_galois_2009}, every nonzero polynomial in the above set has at most $q^{m-1}$ zeros and so the result follows from Lemma~\ref{le:f=0}.
\end{proof}
\par
The following lemma characterizes the equivalence between two rank-metric codes using the language of linearized polynomials. It is an immediate consequence of Definition~\ref{def:equivalence}.
\begin{lemma}
\label{le:equivalence}
Let $\rC_1$ and $\rC_2$ be subsets of $\rL_{\F/\K}$, and let $U$ and $W$ be two $m$-dimensional $\K$-subspaces of $\F$ with $m\leq n$. The sets of matrices associated with $\pi_U(\rC_1)$ and $\pi_W(\rC_2)$ are equivalent if and only if there exist $\varphi_1$, $\varphi_2, h \in \rL_{\F/\K}$ and $\rho\in \Aut(\K)$ such that
\begin{enumerate}[(a)]
\setlength{\itemsep}{1ex}
\item $\varphi_1(W)=U$,
\item $\varphi_2(\F)=\F$,
\item $\{\pi_W(\varphi_2\circ f^\rho\circ \varphi_1+h):f\in\rC_1\}=\{\pi_W(g):g\in \rC_2\}$.
\end{enumerate}
(Here $f^\rho=\sum a_i^\rho X^i$ for $f=\sum a_i X^i\in\F[X]$.) If $\pi_W(\rC_1)$ and $\pi_U(\rC_2)$ are both $\K$-linear, then we can always take $h=0$.
\end{lemma}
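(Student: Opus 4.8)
The plan is to translate Definition~\ref{def:equivalence} term by term through the dictionary of Lemma~\ref{le:polynomials_matrices}, under which a linearized polynomial $f$ corresponds, once bases $\{\alpha_i\}$ of $U$ and $\{\gamma_i\}$ of $W$ are fixed, to the matrix $\varphi_U(f)$ whose $i$-th row is $\bv(f(\alpha_i))$; concretely $\varphi_U(f)$ is the matrix of the $\K$-linear map $f|_U\colon U\to\F$. Since every isometry in Definition~\ref{def:equivalence} has the form $X\mapsto AX^\rho B+C$ (with the transpose variant available only when $m=n$), it suffices to interpret each of the ingredients $A$, $B$, $C$, $\rho$ as an operation on $\rL_{\F/\K}$ and then read off conditions~(a)--(c). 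I will establish this dictionary in the forward direction, passing from a code equivalence to data $(\varphi_1,\varphi_2,h,\rho)$; because each identification is reversible, the converse implication is obtained by running the same correspondences backwards, and I would only remark on this.

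First I would treat right multiplication. A matrix $B\in\GL_n(\K)$ acts on the coordinate side, i.e.\ on $\F$ itself: writing $\tau\in\GL_\K(\F)$ for the $\K$-linear bijection with $\bv(\tau(x))=\bv(x)B$, right multiplication by $B$ sends $\varphi_U(f)$ to the matrix with rows $\bv(\tau(f(\alpha_i)))=\bv((\tau\circ f)(\alpha_i))$. Identifying $\GL_n(\K)$ with the units of $\End_{\K}(\F)\cong\rL_{\F/\K}/(X^{q^n}-X)$, this is exactly postcomposition $f\mapsto\varphi_2\circ f$ by a linearized polynomial with $\varphi_2(\F)=\F$, which is condition~(b). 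Left multiplication by $A\in\GL_m(\K)$ instead acts on the $m$ evaluation points: the $i$-th row of $A\varphi_U(f)$ is $\bv(f(\sum_j A_{ij}\alpha_j))$, so $A$ merely replaces $\{\alpha_i\}$ by another basis of the \emph{same} subspace $U$. Combined with the fact that $\rC_1$ and $\rC_2$ are attached to the possibly different subspaces $U$ and $W$, this change of basis is realized by precomposition $f\mapsto f\circ\varphi_1$ with a linearized polynomial $\varphi_1$ carrying $W$ isomorphically onto $U$, giving condition~(a). The translation by $C\in\K^{m\times n}$ is handled using the surjectivity of $\varphi$ in Lemma~\ref{le:polynomials_matrices}: choosing $h\in\rL_{\F/\K}$ with $\varphi_W(h)=C$ turns $+C$ into $+h$, and for $\K$-linear codes we may take $C=0$, hence $h=0$.

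The delicate ingredient, and the step I expect to be the main obstacle, is the field automorphism $\rho$. Applied entrywise to $\varphi_U(f)\in\K^{m\times n}$, it produces the matrix with rows $\bv(f(\alpha_i))^\rho=\bv(\sigma(f(\alpha_i)))$, where $\sigma=\bv^{-1}\circ(\,\cdot\,)^\rho\circ\bv$ is a $\rho$-semilinear bijection of $\F$. The subtlety is that $\sigma$ is \emph{not} the coefficient twist $f\mapsto f^\rho$: letting $\tilde\rho$ denote the extension of $\rho$ to $\F$, the identity $g^\rho=\tilde\rho\circ g\circ\tilde\rho^{-1}$ shows that $\sigma$ differs from $\tilde\rho$ only by a $\K$-linear factor. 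The real work is therefore to verify that, once the Galois twist $f\mapsto f^\rho$ is introduced, the residual $\K$-linear discrepancy can be absorbed into the bijection $\varphi_2$ and into $\varphi_1$ without disturbing conditions~(a) and~(b); concretely one solves $g(\gamma_i)=\varphi_2\big(f^\rho(\varphi_1(\gamma_i))\big)+h(\gamma_i)$ on the basis $\{\gamma_i\}$ of $W$ and extends $\K$-linearly, invoking Lemma~\ref{le:f=0} so that agreement on $W$ coincides with congruence modulo $\theta_W$. Assembling the four interpretations then produces exactly the set equality in condition~(c). Finally, for $m=n$ the transpose alternative in Definition~\ref{def:equivalence} corresponds to passing to the adjoint linearized polynomial; since the application in Theorem~\ref{thm:counting_main} lives in the regime $2\le m\le n-2$, I would either record this as a separate clause or note that it does not intervene there.
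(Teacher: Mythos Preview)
Your proposal is correct and is precisely the natural unpacking that the paper has in mind: the paper gives no argument at all beyond the sentence ``It is an immediate consequence of Definition~\ref{def:equivalence}'', so your dictionary---$B\leftrightarrow$ postcomposition by a bijective $\varphi_2$, $A\leftrightarrow$ change of evaluation basis realised as precomposition by $\varphi_1$ with $\varphi_1(W)=U$, $C\leftrightarrow +h$, and $\rho\leftrightarrow$ the coefficient twist $f\mapsto f^\rho$---is exactly what that sentence is pointing to. Your identification of the $\rho$-step as the only place requiring care (the entrywise action of $\rho$ on $\K^{m\times n}$ being $\rho$-semilinear rather than $\K$-linear on $\F$, with the discrepancy absorbed into $\varphi_1$ and $\varphi_2$) is accurate, and the paper is silent on this point; likewise your remark on the transpose branch for $m=n$ is something the paper simply does not address.
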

\par
We also need to introduce the following concept, which is crucially required in determining the automorphism groups of Gabidulin codes in \cite{liebhold_automorphism_2016}. For a subset $\rC$ of $\rL_{\F/\K}$ and a $\K$-subspace $W$ of $\F$, the \emph{right nucleus} of $\pi_W(\rC)$ is defined to be
\[
\cN_r(\pi_W(\rC))=\left\{\varphi\in \End_{\K}(\F):\pi_W(\varphi\circ f)\in  \pi_W(\rC)\;\text{for all $f\in \rC$}\right\}
\] 
and the \emph{middle nucleus} of $\pi_W(\rC)$ is defined to be
\[
\cN_m(\pi_W(\rC))=\left\{\psi\in\End_{\K}(W):\pi_W(f\circ\psi)\in\pi_W(\rC)\;\text{for all $f\in \rC$}\right\}.
\]
Using Lemma~\ref{le:equivalence}, it is readily verified that, if $\pi_W(\rC)$ is $\K$-linear, then both nuclei are invariant under the equivalence of rank-metric codes; see \cite{lunardon_kernels_2017} for details.
\begin{remark}
It appears a bit strange to call $\cN_r(\pi_W(\rC))$ the right nucleus, although $\varphi$ acts via left composition on $\rC$. Indeed the right nucleus is originally defined as a set of matrices, which act via right multiplication on a rank-metric code in $\K^{m\times n}$. The name middle nucleus seems even more unnatural. Originally middle nuclei were only defined for semifields, which correspond to $\K$-linear MRD codes in $\K^{n\times n}$ with minimum distance~$n$. Our definition of the middle nucleus is consistent with that for semifields; see \cite{lunardon_kernels_2017}, in which it is also proved that the middle nucleus of an MRD code is always a field, whereas its right nucleus is not necessarily a field.
\end{remark}
\par
The following lemma relates the nuclei of equivalent MRD codes.
\begin{lemma}\label{le:two_codes_nuclei}
Let $U$ and $W$ be $m$-dimensional $\K$-subspaces of $\F$. Assume that $\pi_U(\rC_1)$ and $\pi_W(\rC_2)$ are $\K$-linear codes equivalent under $(\varphi_2, \varphi_1, \rho)$, where $\varphi_1,\varphi_2 \in \rL_{\F/\K}$ are such that $\varphi_1(W)=U$ and $\varphi_2(\F)=\F$ and $\rho\in\Aut(\K)$.
\begin{enumerate}[(1)]
\item\label{item:le:two_codes_nuclei_1} The map $\tau_m$ defined by
\[
\tau_m:\gamma_1\mapsto\varphi_1^{-1}\circ\gamma_1^{\rho}\circ\varphi_1
\]
is an isomorphism from $\cN_m(\pi_U(\rC_1))$ to $\cN_m(\pi_W(\rC_2))$.

\item\label{item:le:two_codes_nuclei_2} The map $\tau_r: \cN_r(\pi_U(\rC_1))\rightarrow \cN_r(\pi_W(\rC_2))$ defined by
\[
\tau_r:\gamma_2\mapsto\varphi_2\circ\gamma_2^{\rho}\circ\varphi_2^{-1}
\]
is an isomorphism from $\cN_r(\pi_U(\rC_1))$ to $\cN_r(\pi_W(\rC_2))$.
\end{enumerate}
\end{lemma}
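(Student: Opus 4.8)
The plan is to package the given equivalence into a single intertwining bijection between the two quotient rings and read off both statements from it. By Lemma~\ref{le:equivalence} (with $h=0$, since both codes are $\K$-linear), equivalence under $(\varphi_2,\varphi_1,\rho)$ says precisely that
\begin{equation*}
\pi_W(\rC_2)=\{\pi_W(\varphi_2\circ f^\rho\circ\varphi_1):f\in\rC_1\}.\tag{$\star$}
\end{equation*}
Accordingly I would introduce $\Psi:\rL_{\F/\K}/(\theta_U)\to\rL_{\F/\K}/(\theta_W)$, $\Psi(\pi_U(p))=\pi_W(\varphi_2\circ p^\rho\circ\varphi_1)$. Under the identifications of Lemma~\ref{le:polynomials_matrices}, $\Psi$ is a matrix map of the form $M\mapsto AM^\rho B$ with $A\in\GL_m(\K)$ and $B\in\GL_n(\K)$; hence $\Psi$ is a well-defined additive bijection, and by $(\star)$ it carries $\pi_U(\rC_1)$ onto $\pi_W(\rC_2)$.

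Next I would establish two intertwining identities. Using that the Frobenius twist commutes with composition, $(\alpha\circ\beta)^\rho=\alpha^\rho\circ\beta^\rho$, that $\varphi_2^{-1}\circ\varphi_2=\id$ on $\F$ and $\varphi_1\circ\varphi_1^{-1}=\id$ on $U$, and that left composition always respects $\equiv\pmod{\theta_W}$ while right composition by an endomorphism of $W$ does too, one checks as classes modulo $\theta_W$ that
\begin{equation*}
\Psi(\pi_U(\gamma_2\circ f))=\tau_r(\gamma_2)\circ\Psi(\pi_U(f)),\qquad \Psi(\pi_U(f\circ\gamma_1))=\Psi(\pi_U(f))\circ\tau_m(\gamma_1).
\end{equation*}
The first comes from $\varphi_2\circ(\gamma_2\circ f)^\rho\circ\varphi_1=(\varphi_2\circ\gamma_2^\rho\circ\varphi_2^{-1})\circ(\varphi_2\circ f^\rho\circ\varphi_1)$, the second from $\varphi_2\circ(f\circ\gamma_1)^\rho\circ\varphi_1=(\varphi_2\circ f^\rho\circ\varphi_1)\circ(\varphi_1^{-1}\circ\gamma_1^\rho\circ\varphi_1)$.

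With these in hand both parts are immediate. For part~(\ref{item:le:two_codes_nuclei_2}), $\gamma_2\in\cN_r(\pi_U(\rC_1))$ means $\gamma_2\circ\pi_U(\rC_1)\subseteq\pi_U(\rC_1)$; applying the bijection $\Psi$ and the first identity turns this into $\tau_r(\gamma_2)\circ\pi_W(\rC_2)\subseteq\pi_W(\rC_2)$, i.e.\ $\tau_r(\gamma_2)\in\cN_r(\pi_W(\rC_2))$, and since $\Psi$ is bijective the implication reverses, so $\tau_r$ is onto as well. Part~(\ref{item:le:two_codes_nuclei_1}) is identical via the second identity and right composition. That each $\tau$ is a ring homomorphism is a one-line telescoping: inserting $\varphi_2^{-1}\circ\varphi_2=\id$ gives $\tau_r(\gamma_2\circ\gamma_2')=\tau_r(\gamma_2)\circ\tau_r(\gamma_2')$, and likewise for $\tau_m$, while additivity is clear. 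Injectivity is immediate from invertibility of $\varphi_1,\varphi_2$; for a genuine two-sided inverse I would run the same construction on the reverse equivalence obtained by solving $(\star)$ for $f$ (with data built from $\varphi_1^{-1},\varphi_2^{-1},\rho^{-1}$), and a direct substitution shows the two maps compose to the identity.

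The step requiring the most care is the very existence of $\Psi$ as a well-defined bijection of the two quotient rings, together with the membership claims $\tau_r(\gamma_2)\in\End_\K(\F)$ and $\tau_m(\gamma_1)\in\End_\K(W)$. The delicate point is that the twist $p\mapsto p^\rho$ does \emph{not} fix $U$: one has $\theta_U^\rho=\theta_{\rho(U)}$, so a polynomial vanishing on $U$ maps under $(\cdot)^\rho$ to one vanishing on $\rho(U)$ rather than $U$, and thus reduction modulo $\theta_U$ is compatible with reduction modulo $\theta_W$ only through the combined action of $\varphi_1$ (with $\varphi_1(W)=U$) and the twist. I would justify this compatibility, and the two $\End$-memberships, by passing to the matrix picture of Lemmas~\ref{le:polynomials_matrices} and~\ref{le:equivalence}, where $\Psi$ becomes the bijection $M\mapsto AM^\rho B$ and $\tau_r,\tau_m$ become conjugations by the invertible matrices $B$ and $A$ respectively, for which well-definedness and $\End$-membership are transparent.
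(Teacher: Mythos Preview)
Your argument is correct and follows essentially the same route as the paper's: both transport nuclei through the equivalence data of Lemma~\ref{le:equivalence} by conjugation. The paper is more terse---it picks $\gamma_2\in\cN_m(\pi_W(\rC_2))$, unwinds the equivalence to land $(\varphi_1\circ\gamma_2\circ\varphi_1^{-1})^{\rho^{-1}}$ in $\cN_m(\pi_U(\rC_1))$, and reads off that this is the inverse of $\tau_m$---while you package the same computation via the intertwining bijection $\Psi$ and are a bit more explicit about the well-definedness issues around the $\rho$-twist.
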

\begin{proof} 
By Lemma~\ref{le:equivalence} we have
\[
\{\pi_W(\varphi_2\circ f^\rho\circ \varphi_1) : f\in \rC_1 \}= \{\pi_W(g): g\in \rC_2 \}.
\]
For each $\gamma_2\in \cN_m(\pi_W(\rC_2))$ we have
\[
\pi_U((\varphi_2^{-1}\circ (\varphi_2\circ f^\rho\circ \varphi_1 \circ \gamma_2) \circ \varphi_1^{-1})^{\rho^{-1}})\in \pi_U(\rC_1)
\]
for all $f\in \rC_1$, whence
\[
\pi_U((f^\rho\circ \varphi_1 \circ \gamma_2 \circ \varphi_1^{-1})^{\rho^{-1}})\in \pi_U(\rC_1)
\]
for all $f\in \rC_1$. Thus  
\[
(\varphi_1 \circ \gamma_2 \circ \varphi_1^{-1})^{\rho^{-1}}\in \cN_m(\pi_U(\rC_1)).\]
Let $\gamma_1$ denote $(\varphi_1 \circ \gamma_2 \circ \varphi_1^{-1})^{\rho^{-1}}$. It follows that $\varphi_1^{-1} \circ \gamma_1^{\rho} \circ \varphi_1=\gamma_2$ and so the map $\tau_m$ is an isomorphism from $\cN_m(\pi_U(\rC_1))$ to $\cN_m(\pi_W(\rC_2))$. The properties of $\tau_r$ can be proved similarly. 
\end{proof}

%%%%%%%%%%%%%%%%%%%%%%%%%%%%%%%%%%%%%%%%%%%%%%%%%%%%%%%%%%%%%%%%%%%%%%%%

\section{Gabidulin codes}
\label{sec:codes}

We still use $\K$ to denote a finite field with $q$ elements and let $\F$ be an extension of $\K$ with $[\F:\K]=n$.
\par
Let $n,k,s$ be positive integers with $\gcd(s,n)=1$ and $1\le k\le n$. Define
\[
\cG_{k,s}=\{a_0 X+a_1 X^{q^s}+\dots+a_{k-1}X^{q^{s(k-1)}}:a_0,a_1,\dots,a_{k-1}\in \F\},
\]
For $k\le m$, let $U$ be an $m$-dimensional $\K$-subspace of $\F$ with a basis $\{\alpha_1,\dots,\alpha_m\}$. A (projected) \emph{Gabidulin code} is defined as
\[
\left\{\left(\bv(f(\alpha_1)),\dots, \bv(f(\alpha_m))\right)^T:f\in\cG_{k,s}\right\}.
\]
This is an MRD code in $\K^{m\times n}$ with minimum distance $m-k+1$, which is a consequence of the fact that each polynomial in $\cG_{k,s}$ has at most $q^{k-1}$ zeros in $\F$~\cite{gow_galois_2009}~\cite{kshevetskiy_new_2005}. In view of Lemma~\ref{le:polynomials_matrices} we identify this code with $\pi_U(\cG_{k,s})$.
\par
Our main result is the following.
\begin{theorem}
\label{th:iff_mn_codes_all}
Let $k,s,m,n$ be positive integers satisfying $\gcd(n,s)=1$ and $k<m\leqslant n$. Let $U$ and $W$ be two $m$-dimensional $\K$-subspaces of $\F$. Then $\pi_U(\cG_{k,s})$ and $\pi_W(\cG_{k,s})$ are equivalent if and only if $W$ can be mapped to $U$ under the action of
\[
\GL_1(\F)\rtimes \Aut(\F/\K).
\]
\end{theorem}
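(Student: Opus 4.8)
The plan is to prove both implications through Lemma~\ref{le:equivalence}, using the right nucleus together with Lemma~\ref{le:two_codes_nuclei} to control the codomain map $\varphi_2$, and a degree argument based on the canonical representatives of Corollary~\ref{coro:representation_all} to control the domain map $\varphi_1$. Throughout, note that the orbit of $W$ under $\GL_1(\F)\rtimes\Aut(\F/\K)\cong\GamL_1(\F)$ is exactly $\{\lambda W^{q^t}:\lambda\in\F^*,\ t\in\Z\}$, where $\lambda W^{q^t}=\{\lambda w^{q^t}:w\in W\}$.

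For the backward direction, suppose $U=\lambda W^{q^t}$. I would apply Lemma~\ref{le:equivalence} with $\rC_1=\rC_2=\cG_{k,s}$, $\rho=\id$, $h=0$, and the monomials $\varphi_1=\lambda X^{q^t}$ and $\varphi_2=X^{q^{n-t}}$. Then $\varphi_1(W)=U$ and $\varphi_2(\F)=\F$, while a direct substitution gives $\varphi_2\circ f\circ\varphi_1=\sum_{j=0}^{k-1}a_j^{q^{-t}}\lambda^{q^{sj-t}}X^{q^{sj}}$ for $f=\sum_j a_jX^{q^{sj}}$. As $f$ runs through $\cG_{k,s}$ the coefficients run independently through $\F$, so $\varphi_2\circ f\circ\varphi_1$ runs through $\cG_{k,s}$; hence condition~(c) holds and the two codes are equivalent.

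For the forward direction, I would start from the data $\varphi_1,\varphi_2,\rho$ of Lemma~\ref{le:equivalence}, with $\varphi_1(W)=U$, $\varphi_2(\F)=\F$, and $\{\pi_W(\varphi_2\circ f^\rho\circ\varphi_1):f\in\cG_{k,s}\}=\pi_W(\cG_{k,s})$. The first step is to show that $\cN_r(\pi_W(\cG_{k,s}))$ is exactly the field of scalar maps $\{x\mapsto\beta x:\beta\in\F\}\cong\F$ inside $\End_{\K}(\F)$ (the containment is clear, and the reverse can be extracted from the known nucleus computations for Gabidulin codes or proved directly). Granting this, Lemma~\ref{le:two_codes_nuclei}(\ref{item:le:two_codes_nuclei_2}) shows that $\tau_r:\gamma\mapsto\varphi_2\circ\gamma^{\rho}\circ\varphi_2^{-1}$ carries this scalar field to itself, so $\varphi_2$ normalizes $\F^*$ inside $\GL_{\K}(\F)$; since that normalizer is precisely $\GamL_1(\F)$, it follows that $\varphi_2(y)=ay^{q^i}$ is a monomial. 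I would then absorb $\varphi_2$ and $\rho$ into the codomain: conjugation by a monomial preserves $\cG_{k,s}$, and the residual Frobenius twist only replaces $U$ by $U^{q^i}$, which is harmless as Frobenius lies in $\GamL_1(\F)$. This reduces the hypothesis to the one-sided statement $\{\pi_W(f\circ g_0):f\in\cG_{k,s}\}=\pi_W(\cG_{k,s})$, where $g_0\in\cG_{k,s}$ is the image of $X$ and is injective on $W$, and where $U$ is $\GamL_1(\F)$-equivalent to $g_0(W)$; it then remains to prove $g_0(W)=\mu W^{q^r}$ for some $\mu\in\F^*$, $r\in\Z$.

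When $2k\le m+1$ this is clean: the composition $f\circ g_0$ has $q$-degree at most $s(2k-2)\le s(m-1)$, so it lies in the system of canonical representatives of Corollary~\ref{coro:representation_all}, on which $\pi_W$ is injective by Lemma~\ref{le:f=0}; hence $f\circ g_0\in\cG_{k,s}$ identically for every $f$, and comparing the coefficients of the powers $X^{q^{s\ell}}$ for $\ell\ge k$ (starting at $\ell=2k-2$ and descending) forces $g_0=c_0X$ to be scalar, whence $g_0(W)=c_0W$. The main obstacle is the remaining range $2k-1>m$: here $f\circ g_0$ has $q$-degree exceeding $m-1$, so reduction modulo $\theta_W$ genuinely mixes coefficients and the descending induction breaks down. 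I expect to resolve this either by passing to the Delsarte dual, whose projected Gabidulin description has parameter $m-k$ (so $2(m-k)\le m+1$ lands in the easy range) and transporting the conclusion back through the equivalence-preserving duality, or by a direct analysis of the wrap-around relations imposed by $\theta_W$. Once $g_0(W)=\mu W^{q^r}$ is established, transitivity of the $\GamL_1(\F)$-orbit relation gives that $U$ and $W$ lie in the same orbit, i.e.\ $W$ is mapped to $U$ by an element of $\GL_1(\F)\rtimes\Aut(\F/\K)$, completing the proof.
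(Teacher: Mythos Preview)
Your backward direction matches the paper's. Your forward strategy---constrain $\varphi_2$ via the right nucleus and Lemma~\ref{le:two_codes_nuclei}, then constrain $\varphi_1$ by a degree argument---is also the paper's. But two steps in your execution fail.

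First, the assertion that $\cN_r(\pi_W(\cG_{k,s}))$ equals the scalar maps $\{\beta X:\beta\in\F\}$ is wrong in general. By Theorem~\ref{th:middle_right_nuclei}(\ref{item:right_G}) (after normalising so that $1\in W$), if $t$ is the degree of the smallest intermediate field $\E$ with $\K\subseteq\E\subseteq\F$ containing $W$ and $r=n/t$, then
\[
\cN_r(\pi_W(\cG_{k,s}))=\Bigl\{\sum_{i=0}^{r-1}c_iX^{q^{it}}:c_i\in\F\Bigr\}\cong\E^{r\times r},
\]
which strictly contains the scalars whenever $W$ lies inside a proper subfield of $\F$. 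Your normalizer step then only places $\varphi_2$ in $\sN_{\GL_n(\K)}(\cN_r^\times)\cong\GL_r(\E)\rtimes\Aut(\E/\K)$, not in $\GamL_1(\F)$, and there is no reason $\varphi_2$ should be a monomial on all of $\F$. The paper handles this correctly by working modulo $\theta_W$: from the normalizer description one gets only $\varphi_2\equiv cX^{q^j}\pmod{X^{q^t}-X}$, but since $W\subseteq\E$ one has $\theta_W\mid X^{q^t}-X$, hence $\varphi_2\equiv cX^{q^j}\pmod{\theta_W}$, which is all that the remainder of the argument needs.

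Second, your case split at $2k\le m+1$ is an artefact of a suboptimal degree bound, and the range $2k>m+1$ is left genuinely open. The paper's Lemma~\ref{le:mono_to_mono_generalized_middle} treats all $k<m$ uniformly: writing $g_0\equiv\sum_{i=0}^{k-1}c_iX^{q^{is}}\pmod{\theta_W}$ (from the test $f=X$), one lets $j$ be the largest index with $c_j\ne 0$ and tests only with the single monomial $f=X^{q^{(k-j)s}}$. Then
\[
f\circ g_0\equiv\sum_{i=0}^{j}c_i^{q^{(k-j)s}}X^{q^{(i+k-j)s}}\pmod{\theta_W}
\]
has every term except the top one already in $\cG_{k,s}$; subtracting them forces $\pi_W(X^{q^{ks}})\in\pi_W(\cG_{k,s})$, and since $k\le m-1$ the monomial $X^{q^{ks}}$ is itself a canonical representative, so Corollary~\ref{coro:representation_all} gives the contradiction with no restriction relating $k$ to $m/2$. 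No Delsarte duality or wrap-around analysis is needed.
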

\par
Before we prove Theorem~\ref{th:iff_mn_codes_all}, we show how Theorem~\ref{thm:counting_main} can be deduced from Theorem~\ref{th:iff_mn_codes_all}. First observe that $\abs{\GL_1(\F)\rtimes \Aut(\F/\K)}=n(q^n-1)$ and that the number of $m$-dimensional $\K$-subspaces of $\F$ equals
\[
{n \brack m}_q=\prod_{i=1}^m\frac{q^{n-i+1}-1}{q^i-1}.
\]
Since every element of $\GL_1(\K)$ fixes all $\K$-subspaces of $\F$, the action of $\GL_1(\F)\rtimes \Aut(\F/\K)$ partitions the set of $m$-dimensional $\K$-subspaces of~$\F$ into at least
\[
\frac{1}{n}{n \brack m}_q\frac{q-1}{q^n-1} %= \frac{1}{n}{n \brack m}_q\bigg\slash{n\brack 1}_q
\]
orbits. Each such orbit gives an MRD code in $\K^{m\times n}$ and these are by Theorem~\ref{th:iff_mn_codes_all} pairwise inequivalent. This establishes Theorem~\ref{thm:counting_main}.
\par
Notice that Theorem~\ref{thm:counting_main} is almost meaningless for $m=n-1$. Indeed, it is readily verified that, for arbitrary $(n-1)$-dimensional $\K$-subspaces~$U$ and $W$ of $\F$, there exists $a\in\F$ such that $W=aU$. This gives the following corollary of Theorem~\ref{th:iff_mn_codes_all}.
\begin{corollary}
\label{coro:equivalence_m=n-1}
Let $k,s,m,n$ be positive integers satisfying $\gcd(n,s)=1$ and $k<m\leqslant n$. Then, for all $(n-1)$-dimensional $\K$-subspaces $U$ of $\F$, the MRD codes $\pi_{U}(\cG_{k,s})$ are equivalent.
\end{corollary}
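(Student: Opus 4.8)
The plan is to read the corollary off Theorem~\ref{th:iff_mn_codes_all} in the special case $m=n-1$, where the subspaces $U$ and $W$ are hyperplanes of $\F$ viewed as an $n$-dimensional vector space over $\K$. By that theorem, $\pi_U(\cG_{k,s})$ and $\pi_W(\cG_{k,s})$ are equivalent exactly when $W$ lies in the orbit of $U$ under $\GL_1(\F)\rtimes\Aut(\F/\K)$, so it suffices to produce a single $a\in\F^*$ with $aW=U$. In other words, I would show that scalar multiplication by $\F^*$ alone already acts transitively on the set of hyperplanes of $\F$, which immediately forces all the codes $\pi_U(\cG_{k,s})$ to be mutually equivalent.

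For the transitivity I would invoke the nondegeneracy of the trace form: every $\K$-linear functional $\F\to\K$ is of the form $x\mapsto\Tr_{\F/\K}(bx)$ for a unique $b\in\F$, so each hyperplane can be written as $H_b=\{x\in\F:\Tr_{\F/\K}(bx)=0\}$ with $b\neq0$, and $H_b=H_{b'}$ if and only if $b'\in\K^*b$. Substituting $z=ax$ gives $aH_b=H_{ba^{-1}}$ for every $a\in\F^*$. Hence, writing $U=H_b$ and $W=H_{b'}$ and choosing $a=b'/b$, one gets $aW=H_{b'a^{-1}}=H_b=U$, as required.

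The same conclusion can be reached by an orbit--stabiliser count, which I find more revealing. The stabiliser $\{a\in\F^*:aU=U\}$, together with $0$, is a $\K$-subalgebra of $\F$ and hence an intermediate field $\K\subseteq\K_0\subseteq\F$, and $U$ is then a $\K_0$-subspace. If $e=[\K_0:\K]$ then $e\mid n$ and $e\mid\dim_\K U=n-1$, so $e\mid\gcd(n,n-1)=1$ and the stabiliser is just $\K^*$. Consequently the orbit of $U$ has size $(q^n-1)/(q-1)={n\brack n-1}_q$, which is the total number of hyperplanes, so the action is transitive. The one genuine point is this transitivity, and the step $e\mid\gcd(n,n-1)=1$ is exactly where codimension $1$ enters; for general $m$ the corresponding $\gcd$ is typically larger than $1$, which is precisely why the statement is special to hyperplanes.
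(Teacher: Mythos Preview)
Your proposal is correct and follows exactly the same route as the paper: reduce to Theorem~\ref{th:iff_mn_codes_all} and then show that $\GL_1(\F)$ already acts transitively on the $(n-1)$-dimensional $\K$-subspaces of~$\F$. The paper simply asserts this transitivity as ``readily verified'', whereas you supply two explicit justifications (via the trace form and via orbit--stabiliser), both of which are sound.
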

\par
To prove Theorem~\ref{th:iff_mn_codes_all}, we require the following result that gives the nuclei of projections of Gabidulin codes.
\begin{theorem}
\label{th:middle_right_nuclei}
Let $k,s,m,n$ be positive integers satisfying $k<m\leqslant n$ and $\gcd(s,n)=1$. Let $U$ be an $m$-dimensional $\K$-subspace of $\F$. 
\begin{enumerate}[(1)]
\item\label{item:middle_G} Let $t$ be the largest integer such that $U$ is an $\E$-subspace of $\F$ where $\E$ is an extension of $\K$ with $[\E:\K]=t$. Then the middle nucleus of $\pi_{U}(\cG_{k,s})$ is
\[
\cN_m(\pi_{U}(\cG_{k,s}))=\{cX : c\in \E\}.
\]
\item\label{item:right_G} Let $t$ be the smallest positive integer such that $U$ is contained in an extension $\E$ of $\K$ with $[\E:\K]=t$ and write $r=n/t$. If $1\in U$, then the right nucleus of $\pi_U(\cG_{k,s})$ is
\[
\cN_r( \pi_U(\cG_{k,s}))=\left\{\sum_{i=0}^{r-1} c_i X^{q^{it}}: c_0,\dots,c_{r-1}\in \F\right\}.
\]
\end{enumerate}
\end{theorem}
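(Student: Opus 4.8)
The plan is to prove each identity by establishing two inclusions, the containment of the displayed set in the nucleus being a routine verification and the reverse containment carrying the content.

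For part~(1), if $c\in\E$ then $cU=U$, so $cX\in\End_\K(U)$, and for $f=\sum_i a_iX^{q^{si}}\in\cG_{k,s}$ one computes $f\circ(cX)=\sum_i a_i c^{q^{si}}X^{q^{si}}\in\cG_{k,s}$; hence $\{cX:c\in\E\}\subseteq\cN_m(\pi_U(\cG_{k,s}))$. For the reverse inclusion I represent an arbitrary $\psi\in\cN_m(\pi_U(\cG_{k,s}))$ on $U$: by Corollary~\ref{coro:representation_all} the polynomials $X,X^{q^s},\dots,X^{q^{s(m-1)}}$ furnish unique representatives modulo $\theta_U$, so $\psi$ agrees on $U$ with a unique $g=\sum_{i=0}^{m-1}a_iX^{q^{si}}$, and the condition for $f=X$ forces $a_i=0$ for $i\ge k$. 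The heart of the argument is a downward sweep: because $k\le m-1$, applying the condition to $f=X^{q^{sj}}$ for $j=1,2,\dots,k-1$ in turn pushes the current top term of $g$ to $q^s$-degree exactly $k$, which still lies in the admissible range $\{0,\dots,m-1\}$ but outside the range $\{0,\dots,k-1\}$ supporting $\cG_{k,s}$; uniqueness of representatives then kills the leading coefficient. After these steps $g=a_0X$, and $\psi(U)\subseteq U$ gives $a_0\in\E$. I emphasise that this sweep in fact shows, \emph{without} assuming $\psi(U)\subseteq U$, that any $\K$-linear $g\colon U\to\F$ satisfying the middle-nucleus condition acts on $U$ as multiplication by a single scalar; this extended form will be used in part~(2).

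Part~(2) follows the same template. The inclusion $\supseteq$ is again direct: for $\varphi=\sum_{i=0}^{r-1}c_iX^{q^{it}}$ and $f\in\cG_{k,s}$, the relation $z^{q^{it}}=z$ for $z\in\E$ collapses $\varphi\circ f$ on $U\subseteq\E$ to $\sum_{i,l}c_i a_l^{q^{it}}x^{q^{sl}}\in\cG_{k,s}$. For the reverse inclusion I write $\varphi=\sum_{j=0}^{n-1}c_jX^{q^j}$ and abbreviate $B_\kappa=X^{q^{s\kappa}}|_U$, so that $B_0,\dots,B_{m-1}$ is an $\F$-basis of $\Hom_\K(U,\F)$ and $\pi_U(\cG_{k,s})=\langle B_0,\dots,B_{k-1}\rangle_\F$. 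Since $U\subseteq\E$ makes $B_\kappa$ depend only on $\kappa\bmod t$, testing the defining condition on $f=\beta X^{q^{si_0}}$ for all $\beta\in\F$ expresses $\varphi\circ f$ on $U$ as $\sum_j c_j\beta^{q^j}B_{i_0+\iota(j)}$, where $j\equiv s\,\iota(j)\pmod t$. Composing with any $\F$-linear functional on the quotient $\Hom_\K(U,\F)/\pi_U(\cG_{k,s})$ produces a $\K$-linearized polynomial in $\beta$ of $q$-degree below $n$ that vanishes on all of $\F$, hence is identically zero; this forces, for every $i_0\in\{0,\dots,k-1\}$, that $c_j=0$ unless $B_{i_0+\iota(j)}\in\pi_U(\cG_{k,s})$.

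It remains to show this constraint annihilates every $c_j$ with $j\not\equiv0\pmod t$. Fix such a $j$ and put $\iota=\iota(j)\not\equiv0\pmod t$; the constraint says $\{B_\iota,\dots,B_{\iota+k-1}\}\subseteq\pi_U(\cG_{k,s})$, and a dimension count—the semilinear shift $\Sigma\colon g\mapsto g^{q^s}$ carries $B_\kappa$ to $B_{\kappa+1}$ and is bijective—upgrades this to $\Sigma^{\iota}(\pi_U(\cG_{k,s}))=\pi_U(\cG_{k,s})$. As $B_{\kappa+t}=B_\kappa$ on $U$ yields $\Sigma^{t}(\pi_U(\cG_{k,s}))=\pi_U(\cG_{k,s})$ as well, the invariance holds for $d=\gcd(\iota,t)$, a proper divisor of $t$. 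The decisive step is then to recycle this through part~(1): the map $\phi\colon u\mapsto u^{q^{sd}}$ satisfies $(f\circ\phi)|_U=\sum_i a_i B_{i+d}\in\pi_U(\cG_{k,s})$ for every $f\in\cG_{k,s}$, i.e.\ $\phi$ obeys the middle-nucleus condition, so by the extended form of part~(1) it acts on $U$ as a scalar; evaluating at $1\in U$ shows this scalar is $1$, whence $u^{q^{sd}}=u$ for all $u\in U$ and $U\subseteq\F_{q^{d}}\subsetneq\E$, contradicting the minimality of $\E$. I expect the main obstacle to be precisely this reverse inclusion of part~(2): the careful reduction modulo $t$ that extracts a Frobenius-shift invariance of the code from the defining condition for a general, possibly low-dimensional, subspace $U$, together with the point that the shift must be fed back into the middle nucleus, where the hypotheses $1\in U$ and the minimality of $\E$ finally do their work.
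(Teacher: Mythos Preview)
Your argument is correct. The paper does not actually prove Theorem~\ref{th:middle_right_nuclei}: it states the result and refers to \cite{liebhold_automorphism_2016}, \cite{morrison_equivalence_2014}, and \cite{trombetti_nuclei_2016} for proofs in matrix and linearized-polynomial language respectively. Your proof of part~(1), and in particular its ``extended form'' (dropping the hypothesis $\psi(U)\subseteq U$), is precisely the content of the paper's Lemma~\ref{le:mono_to_mono_generalized_middle}, proved there by the same leading-coefficient elimination; the only cosmetic difference is that the paper picks the largest nonzero index at once rather than sweeping downward. Your treatment of part~(2) is a clean self-contained argument: the key moves---using the $t$-periodicity of $B_\kappa$ on $U\subseteq\E$, separating the $\beta^{q^j}$ by linear independence of Frobenius powers, extracting $\Sigma^{\gcd(\iota,t)}$-invariance of the code, and then feeding the resulting Frobenius shift back through the extended part~(1) to force $U\subseteq\F_{q^d}$ with $d\mid t$ proper---are all sound (note $\gcd(sd,n)=d$ since $\gcd(s,n)=1$ and $d\mid t\mid n$, so $u^{q^{sd}}=u$ really gives $U\subseteq\F_{q^d}$). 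This gives an internal proof where the paper outsources one, and it has the pleasant feature of making part~(2) rest directly on the mechanism behind part~(1).
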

\par
In the form of matrices, Theorem~\ref{th:middle_right_nuclei} was proved in~\cite{liebhold_automorphism_2016}; for the middle nucleus a proof can also be found in~\cite{morrison_equivalence_2014}. For a proof of Theorem~\ref{th:middle_right_nuclei} in the above form, we refer to \cite{trombetti_nuclei_2016}.
\par
We also require the following lemma.
\begin{lemma}\label{le:mono_to_mono_generalized_middle}
Let $k,s,m,n$ be positive integers satisfying $k<m\le n$ and $\gcd(s,n)=1$. Let $W$ be an $m$-dimensional $\K$-subspace of $\F$ and suppose that there exists $\psi\in \rL_{\F/\K}$ is such that $\pi_{W}(f\circ \psi)\in\pi_{W}(\cG_{k,s}) $ for every $f\in\cG_{k,s}$. Then
\[
\psi(X) \equiv bX \pmod {\theta_W}
\]
for some $b\in\F$.
\end{lemma}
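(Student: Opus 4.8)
The plan is to normalize $\psi$ and then extract a single uncancellable high-order term. Applying the hypothesis with $f=X\in\cG_{k,s}$ gives $\pi_W(\psi)\in\pi_W(\cG_{k,s})$, so $\psi\equiv g_0\pmod{\theta_W}$ for some $g_0\in\cG_{k,s}$. Since $f\circ\psi$ restricted to $W$ depends only on $\psi|_W$, I may replace $\psi$ by $g_0$ and assume from the start that $\psi=\sum_{j=0}^{k-1}b_jX^{q^{sj}}\in\cG_{k,s}$. Because $\abs{W}=q^m>q^{k-1}$ and every nonzero element of $\cG_{k,s}$ has at most $q^{k-1}$ roots, the map $\pi_W$ is injective on $\cG_{k,s}$ (Lemma~\ref{le:f=0}), so the coefficients $b_0,\dots,b_{k-1}$ are uniquely determined by $\psi\bmod\theta_W$; in particular the statement ``$b_j\neq0$'' is meaningful. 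It then suffices to prove $b_j=0$ for all $j\ge1$, for then $\psi\equiv b_0X\pmod{\theta_W}$ and we take $b=b_0$.

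Suppose otherwise, and let $j_0$ be the largest index with $b_{j_0}\neq0$, so $1\le j_0\le k-1$. The crucial step is to feed the hypothesis the \emph{specifically chosen} polynomial $f=X^{q^{s(k-j_0)}}\in\cG_{k,s}$, which is legitimate since $1\le k-j_0\le k-1$. Composing gives
\[
f\circ\psi=\sum_{j=0}^{j_0}b_j^{q^{s(k-j_0)}}X^{q^{s(k-j_0+j)}},
\]
in which the unique term whose exponent is $q^{sk}$ is the one with $j=j_0$, and its coefficient $b_{j_0}^{q^{s(k-j_0)}}$ is nonzero. By hypothesis there exists $g\in\cG_{k,s}$ with $(f\circ\psi)|_W=g|_W$, and since the exponents occurring in $g$ are at most $q^{s(k-1)}$, the polynomial $g$ contributes nothing to the coefficient of $X^{q^{sk}}$.

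Consequently $h:=f\circ\psi-g$ lies in $\cG_{k+1,s}$, its coefficient at $X^{q^{sk}}$ equals $b_{j_0}^{q^{s(k-j_0)}}\neq0$, and $h$ vanishes identically on $W$ by the choice of $g$. This yields the sought contradiction: by \cite[Theorem~5]{gow_galois_2009} (applicable as $\gcd(s,n)=1$ and $k+1\le n$), a nonzero polynomial in $\cG_{k+1,s}$ has at most $q^{k}$ roots in $\F$, whereas $\abs{W}=q^m>q^k$ precisely because $k<m$. Hence $b_{j_0}=0$, contradicting the choice of $j_0$, and the lemma follows.

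The argument is essentially free of case distinctions, so the only delicate points are bookkeeping. The reduction to $\psi\in\cG_{k,s}$ is harmless exactly because the hypothesis constrains $\psi$ solely through $\psi|_W$, and the whole proof turns on the observation that the substitution $f=X^{q^{s(k-j_0)}}$ pushes the composition $f\circ\psi$ to have top exponent \emph{precisely} $q^{sk}$, so that one monomial of order $k$ survives and cannot be absorbed by any element of $\cG_{k,s}$. I expect the main conceptual obstacle to be finding this exponent choice and recognising that the hypothesis $k<m$ is needed at exactly one point, namely the final inequality $q^m>q^k$; this is also why the rigidity degrades for large $m$ relative to $k$ and dovetails with the degenerate behaviour recorded in Corollary~\ref{coro:equivalence_m=n-1}.
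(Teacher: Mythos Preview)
Your proof is correct and follows essentially the same route as the paper: normalize $\psi$ via $f=X$, pick the top nonzero coefficient $b_{j_0}$, compose with $f=X^{q^{s(k-j_0)}}$ to force a term $X^{q^{sk}}$, and derive a contradiction from the root bound for $\cG_{k+1,s}$. The only cosmetic difference is that the paper packages the final contradiction through Corollary~\ref{coro:representation_all} (distinct representatives modulo $\theta_W$), whereas you appeal directly to the Gow--Quinlan root count that underlies that corollary; the arguments are the same in substance.
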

\begin{proof}
Recall that 
\[
\cG_{k,s} = \{a_0X+a_1X^{q^s}+\dots+a_{k-1}X^{q^{s(k-1)}}:a_0,a_1,\dots, a_{k-1}\in\F\}.
\]
By taking $f=X$, we have $\pi_W(\psi(X))\in \pi_W(\cG_{k,s})$.  Hence we can assume that
\begin{equation}
\label{eq:varphi(ax)_1}
\psi(X)\equiv\sum_{i=0}^{k-1}c_i X^{q^{is}} \pmod {\theta_W}
\end{equation}
for some $c_0,c_1,\dots,c_{k-1}\in\F$. We show that $c_1=\dots=c_{k-1}=0$. Assume, for a contradiction, that there exists $i\in\{1,2,\dots,k-1\}$ with $c_i\ne 0$. Let~$j$ be the largest such $i$. Since $0<j<k$, we have $X^{q^{(k-j)s}}\in \cG_{k,s}$. Thus, by taking $f=X^{q^{(k-j)s}}$, we obtain
\[
\pi_W(\psi(X)^{q^{(k-j)s}})\in \pi_W(\cG_{k,s}).
\]
From~\eqref{eq:varphi(ax)_1} we find that
\[
\psi(X)^{q^{(k-j)s}}\equiv \sum_{i=0}^j c_i^{q^{(k-j)s}} X^{q^{(i+k-j)s}} \pmod {\theta_W}.
\]
For $i<j$, the summands belong to $\cG_{k,s}$ and, since $\cG_{k,s}$ is an $\F$-space, we obtain
\[
\pi_W(X^{ks})\in\pi_W(\cG_{k,s}).
\]
Since $1<k<m$, Corollary~\ref{coro:representation_all} gives $\pi_W(X^{ks})\notin\pi_W(\cG_{k,s})$, which leads to the desired contradiction.
\end{proof}
\par
We now prove Theorem~\ref{th:iff_mn_codes_all}.
\begin{proof}[Proof of Theorem~\ref{th:iff_mn_codes_all}]
Assume first that $W$ can be mapped to $U$ under the action of $\GL_1(\F)\rtimes \Aut(\F/\K)$. Then there exist $c\in\F^*$ and $j\in\{0,1,\dots,n-1\}$ such that
\[
U=\{cw^{q^j}:w\in W\}.
\]
Take $\varphi_2 = cX^{q^j}$ and $\varphi_1= X^{q^{n-j}}$. Then, for every
\[
f=\sum_{i=0}^{k-1}a_i X^{q^{is}}\in\cG_{k,s},
\]
we have
\[
\varphi_2 \circ f \circ \varphi_1 =c\left(\sum_{i=0}^{k-1} a_i X^{q^{n-j+is}}\right)^{q^j}=	\sum_{i=0}^{k-1} ca_i^{q^j} X^{q^{is}}
\]
and therefore $\varphi_2 \circ f \circ \varphi_1\in \cG_{k,s}$. One also readily verifies that, for every $g\in\cG_{k,s}$ there exists $f\in\cG_{k,s}$ such that $\varphi_2 \circ f\circ\varphi_1=g$. Lemma~\ref{le:equivalence} then implies that $\pi_U(\cG_{k,s})$ and $\pi_W(\cG_{k,s})$ are equivalent.
\par
Now assume that $\pi_U(\cG_{k,s})$ and $\pi_W(\cG_{k,s})$ are equivalent. It is easy to check that, for each $m$-dimensional $\K$-subspace $V$ of $\F$ and each $x\in\F^*$, the codes $\pi_V(\cG_{k,s})$ and $\pi_{xV}(\cG_{k,s})$ are equivalent. We can therefore assume without loss of generality that $1\in U$ and $1\in W$. Let $t$ be the smallest positive integer such that $U$ is contained in an extension $\E$ of $\K$ with $[\E:\K]=t$. Since $\pi_U(\cG_{k,s})$ and $\pi_W(\cG_{k,s})$ are equivalent, they have the same right nuclei, which we denote by $\cN_r$. Writing $r=n/t$, we then find from Theorem~\ref{th:middle_right_nuclei} that
\begin{equation}
\label{eq:Nuclei_U_W}
\cN_r=\left\{\sum_{i=0}^{r-1} c_i X^{q^{it}}: c_0,\dots,c_{r-1}\in \F\right\}.
\end{equation}
In particular, this implies that $W$ is also contained in $\E$. It follows from~\eqref{eq:Nuclei_U_W} that $\cN_r\cong \E^{r\times r}$ and therefore
\begin{equation}
\label{eq:Normalizer(N_r)}
\sN_{\GL_n(\K)}(\cN_r^\times)\cong\GL_r(\E)\rtimes \Aut(\E/\K),
\end{equation}
where $\sN_{G}(S)$ is the normalizer of $S$ in $G$. The latter identity also appears in~\cite{liebhold_automorphism_2016} and can be proved formally using~\cite[Hilfssatz~3.11,~Chapter 2]{huppert_endliche_1967}, for example.
\par
Now, since $\pi_U(\cG_{k,s})$ and $\pi_W(\cG_{k,s})$ are equivalent, there exist $\varphi_1,\varphi_2\in\rL_{\F/\K}$ and $\rho\in\Aut(\K)$ satisfying the conditions of Lemma~\ref{le:equivalence}, namely $\varphi_1(W) = U$, $\varphi_2(\F)=\F$, and
\begin{equation}
\{\pi_W(\varphi_2\circ f^\rho\circ\varphi_1):f\in\cG_{k,s}\}=\{\pi_W(g):g\in\cG_{k,s}\}.   \label{eqn:G_equivalent}
\end{equation}
Since $f^\rho\in\cG_{k,s}$ for each $f\in\cG_{k,s}$, we can without loss of generality, assume that~$\rho$ is the identity mapping. 
\par
Since the right nuclei of $\pi_U(\cG_{k,s})$ and $\pi_W(\cG_{k,s})$ are both equal to $\cN_r$, we conclude from Lemma~\ref{le:two_codes_nuclei} that $\varphi_2$ belongs to $\sN_{\GL_n(\K)}(\cN_r^\times)$. Since $\GL_r(\E)$ corresponds to the subset of all permutation polynomials in~\eqref{eq:Nuclei_U_W}, we find from~\eqref{eq:Normalizer(N_r)} that
\[
\varphi_2\equiv cX^{q^j}\pmod {X^{q^t}-X}
\]
for some $c\in \F^*$ and some $j\in\{0,1,\dots,n-1\}$. Since $W$ is contained in $\E$, we conclude that $\theta_W$ divides $X^{q^t}-X$ and therefore
\[
\varphi_2\equiv cX^{q^j}\pmod {\theta_W}.
\]
Let
\[
f=\sum_{i=0}^{k-1}a_i X^{q^{is}}\in \cG_{k,s}
\]
and write $\widetilde{\varphi}_1=\varphi_1(X)^{q^j}$. Then we have
\begin{align}
f\circ \widetilde{\varphi}_1&=\sum_{i=0}^{k-1}a_i (\varphi_1(X)^{q^{is}})^{q^j}   \nonumber\\
&=c\left(\sum_{i=0}^{k-1}c^{-q^{n-j}}a_i^{q^{n-j}} \varphi_1(X)^{q^{is}}\right)^{q^j}   \nonumber\\
&\equiv\varphi_2\circ \widetilde{f}\circ\varphi_1\pmod {\theta_W},   \label{eqn:f_phi}
\end{align}
where
\[
\widetilde{f}=\sum_{i=0}^{k-1}c^{-q^{n-j}}a_i^{q^{n-j}}X^{q^{is}}.
\]
Since $\widetilde{f}\in\cG_{k,s}$, we find from~\eqref{eqn:G_equivalent} that $\pi_W(\varphi_2\circ \widetilde{f}\circ\varphi_1)\in\pi_W(\cG_{k,s})$ and therefore, using~\eqref{eqn:f_phi},
\[
\pi_W(f\circ \widetilde{\varphi}_1)\in\pi_W(\cG_{k,s}).
\]
Since $f$ was arbitrary, Lemma~\ref{le:mono_to_mono_generalized_middle} implies that
\[
\widetilde{\varphi}_1(X)\equiv bX\pmod{\theta_W}
\]
for some $b\in\F$. Since $\theta_W(x)=0$ for all $x\in W$, we have
\[
\widetilde{\varphi}_1(W)=bW.
\]
On the other hand, we have
\[
U=\varphi_1(W)=\widetilde{\varphi}_1(W)^{q^{n-j}}
\]
and therefore $U=\{(bw)^{q^{n-j}}:w\in W\}$, as required.
\end{proof}

\section*{Acknowledgment}
Yue Zhou would like to thank the hospitality of the University of Augsburg during his staying as a Fellow of the Alexander von Humboldt Foundation. This work is partially supported by the National Natural Science Foundation of China (No.\ 11401579, 11771451).

%\bibliographystyle{abbrv}
%\bibliography{C:/Documents/References/Reference_math}
\end{document}